\newcommand{\nbold}{\bm{\mathrm{n}}}
\newcommand{\mbold}{\bm{\mathrm{m}}}
\newcommand{\Ci}{\mathscr{C}}
\newcommand{\Sone}{\mathbb{S}^1}
\newcommand{\Torus}{\mathbb{T}^2}
\newcommand{\R}{\mathbb{R}}
\newcommand{\Z}{\mathbb{Z}}
\newcommand{\T}{\mathbb{T}}
\newcommand{\ve}{\varepsilon}
\newcommand{\vertiii}[1]{{\left\vert\kern-0.25ex\left\vert\kern-0.25ex\left\vert #1 
    \right\vert\kern-0.25ex\right\vert\kern-0.25ex\right\vert}}
\newcommand{\Jac}{\operatorname{Jac}}
\newcommand{\Leb}{\operatorname{Leb}}
 \newcommand{\mathfrcbfslanted}[1]{\text{{\slshape\cursive \textcal{#1}}}}
 \newcommand{\ncal}{\mathfrcbfslanted{n}}
\theoremstyle{plain}
\newtheorem{thm}{Theorem}
\newtheorem{lem}[thm]{Lemma}
\newtheorem{prop}[thm]{Proposition}
\theoremstyle{definition}
\newtheorem{dfn}[thm]{Definition}
\theoremstyle{remark}
\newtheorem{remark}[thm]{Remark}
\begin{document}

\title[Partial captivity]{The partial captivity condition for U(1) extensions of expanding maps on the circle}

\date{\today}

\author{Yushi Nakano}
\address[Yushi Nakano]{Advanced mathematical institute, Osaka City University, Osaka,
558-8585, JAPAN}
\email{yushi.nakano@gmail.com}

\author{Masato Tsujii}
\address[Masato Tsujii]{Department of Mathematics,
Kyushu University, Fukuoka, 812-8581,
Japan}
\email{tsujii@math.kyushu-u.ac.jp}

\author{Jens Wittsten}
\address[Jens Wittsten]{Department of Mathematical Sciences,
Ritsumeikan University,
Kusatsu, 525-8577, Japan}
\email{jensw@maths.lth.se}

\subjclass[2010]{37D30, 37C20}

\keywords{Dynamical system, partially expanding map, partial captivity, transversality}

\begin{abstract}
This paper concerns the compact group extension 
\[
f:\Torus\to \Torus,\quad 
f  (x,s)=
(E(x), s+\tau(x)\ \text{mod }1)
\]
of an expanding map $E:\Sone\to \Sone$. 
The dynamics of $f$ and its stochastic perturbations have previously been studied under the so-called {\it partial captivity condition}. 
Here we prove a supplementary result that shows that partial captivity is a $\Ci^r$ generic condition on $\tau$, once we fix $E$. 
\end{abstract}

\maketitle

\section{Introduction}\label{section:introduction}

Let $E:\Sone \to \Sone$ 
be a  $\Ci^r$ expanding map on the circle $\Sone=\R/\Z$ of degree $\ell\ge2$.
For each 
$\tau :\Sone \to \R$ in $\Ci^r$, we consider the compact group extension
\begin{equation*}
f=f_{\tau}=f_{E,\tau} :\Torus\to \Torus,\quad 
f  (x,s)=
(E(x), s+\tau(x)\ \text{mod }1)
\end{equation*}
where $\Torus$ denotes the torus $\Sone\times\Sone$.
This is one of the typical examples of partially hyperbolic dynamical systems.  
A naive expectation about the dynamics of $f$ is that we will observe ``virtually'' random dynamics in the fibers, the randomness being driven by the chaotic dynamics of $E$, and consequently that the dynamics of $f$  will be strongly mixing. 
This is of course not true if the function $\tau$ does not transmit the randomness of the dynamics of $E$ to that in the fibers. Indeed, if $\tau(x)\equiv c$, we will observe just a rigid rotation by $c$ in the fibers. Further, if $\tau$ is {\it cohomologous to a constant}, 
i.e.~$\tau(x)=\varphi(E(x))-\varphi(x)+c$ for some $\varphi\in\Ci^r(\Sone)$ and $c\in\R$, 
the dynamics of $f_{E,\tau}$ is conjugated to the case $\tau(x)\equiv c$. To obtain rigorous statements that realize the naive idea described above we must therefore impose some condition on $\tau$. It is of course preferable if such a condition is generic, that is, holds for most systems $f$. 
It is known that $f$ is exponentially mixing once $\tau$ is not cohomologous to a constant. (See \cite[Section 3]{Dolgopyat2002} for instance.) This provides a rather complete description of the mixing property of $f$. 
However, stronger conditions are needed in order to study finer structures of the dynamics, such as spectral properties of the associated transfer operators. 

Below we define and discuss the {\it partial captivity condition} for $f$, which implies roughly that the dynamics of $f$ have properties contrary to those in the case $\tau(x)\equiv c$. (However, we wish to emphasize that the partial captivity condition is strictly stronger than not being cohomologous to a constant, see Appendix \ref{app:B}.) The condition was introduced by Faure~\cite{Faure} to study spectral properties of the associated Perron-Frobenius operators. 
In the previous paper \cite{nw}, the first and third author studied fine properties of stochastic perturbations of $f$ again assuming this condition. 
In this paper, we prove a supplementary result showing that the partial captivity condition is indeed a generic condition. 

Before we state the condition we introduce some notation.  
Note that, by the definition of an expanding map, there are constants $1<\lambda\le \Lambda$ such that
\begin{equation*}
\lambda\le   E' (x)\le \Lambda\quad\text{for all $x\in \Sone$}.
\end{equation*}
Let us set
\begin{equation*}
\vartheta_\tau:=\lVert\tau'\rVert_\infty/(\lambda-1).
\end{equation*}
Fix some $R>\|\tau'\|_\infty$ and put
\begin{equation*}
\vartheta_R:=R/(\lambda-1)>\vartheta_\tau.
\end{equation*}
Then the corresponding cone $\mathscr K_R=\{(\xi,\eta)\in\R^2:\lvert\eta\rvert\le\vartheta_R\lvert\xi\rvert\}$ 
is (forward) invariant under the Jacobian matrix
\[
D f(z)=\left( \begin{array}{cc} E  '(x) & 0\\ \tau'(x) & 1\end{array} \right)
\quad z=(x,s)\in\T^2.
\]
More precisely we have for all $z\in \T^2$ and $m\ge1$ that
\begin{equation}\label{eq:cone_inclusion}
D f^m(z)\mathscr K_R\subset \mathscr K_{R'_m}\subset \mathscr K_R\quad
\text{where $R'_m=\lVert\tau'\rVert_\infty+\lambda^{-m}(R-\lVert\tau'\rVert_\infty)<R$.} 
\end{equation}
 For  $z\in\T^2$ and  $n\ge1$, let us consider  
the images of $\mathscr K_R$ by $Df^n$ in $T_z\Torus$, i.e.
\begin{equation}\label{eq:imagecones}
Df ^n(\zeta) \mathscr K_R \quad \text{for $\zeta\in f^{-n}(z)$}.
\end{equation}
It is not difficult to see that $\tau$ is  cohomologous to a constant if and only if all the cones in \eqref{eq:imagecones} have a line in common at every point $z\in \Torus$ and $n\ge 1$. Thus we naturally come to the idea of considering  transversality between the cones \eqref{eq:imagecones}. 
As a way to quantify this notion, we set
\begin{equation*}
\ncal(\tau,R;n)=\sup_z \sup_v
\# \{ \zeta\in f^{-n}(z)\mid v\in Df^n(\zeta)\mathscr K_R\}
\end{equation*}
where $\sup_v$ denotes the supremum over unit vectors $v\in \mathbb{R}^2$. 
This is sub-mulitiplicative as a function of $n$. Indeed, from \eqref{eq:cone_inclusion}, we have
\begin{equation}\label{eq:ncal_submultplicative}
\ncal(\tau,R;n+m)\le \ncal(\tau,R'_m;n)\cdot \ncal(\tau,R;m)\le \ncal(\tau,R;n)\cdot \ncal(\tau,R;m).
\end{equation}
By Fekete's lemma, the limit
\begin{equation*}
\ncal(\tau):=\lim_{n\to\infty}\ncal(\tau,R;n)^\frac{1}{n}
\end{equation*}
then exists and is equal to $\inf_{n}\ncal(\tau,R;n)^\frac{1}{n}$.
Note that we are justified in dropping $R$ from the notation $\ncal(\tau)$ since it does not depend on $R$ in view of
\eqref{eq:ncal_submultplicative}. 
\begin{dfn}
We say that $f=f_{(E,\tau)}$ is partially captive if  $\ncal(\tau)=1$.
\end{dfn}
This condition is equivalent to the condition introduced and used in \cite{Faure,nw}. (See also \cite{Tsujii}. For completeness, the equivalence is demonstrated in Appendix \ref{app:PC}.)
However, the  partial captivity condition is not proved to be generic in \cite{Faure,nw} and nowhere else. Here we provide the required:
\begin{thm}\label{conj:maingoal}
Let $r\ge 2$ and suppose that the expanding map $E:\Sone\to\Sone$ is fixed. For every $\varrho>1$, there is an open dense subset 
$\mathscr V\subset \Ci^r(\Sone)$ such that 
if $\tau\in \mathscr V$ then 
\begin{equation}\label{eq:genericconditionvariant}
\ncal(\tau)<\varrho.
\end{equation}
Consequently, there is a residual subset 
$\mathscr R\subset \Ci^r(\Sone)$ such that 
$\ncal(\tau)=1$ for $\tau\in \mathscr R$. 
\end{thm}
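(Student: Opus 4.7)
The residual set with $\ncal(\tau)=1$ arises by Baire category as $\mathscr R = \bigcap_{k\ge 1} \mathscr V_{1+1/k}$, so I fix $\varrho>1$ and concentrate on proving $\mathscr V_\varrho = \{\tau \in \Ci^r(\Sone) : \ncal(\tau)<\varrho\}$ open and dense.

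Openness is the easier half. From $\ncal(\tau) = \inf_n \ncal(\tau,R;n)^{1/n}$, membership in $\mathscr V_\varrho$ amounts to $\ncal(\tau,R;n) < \varrho^n$ for some $n$. For each fixed $n$, the integer-valued map $\tau \mapsto \ncal(\tau,R;n)$ is upper semi-continuous on $\Ci^1(\Sone)$: the $\ell^n$ preimages $\zeta \in f_\tau^{-n}(z)$ depend continuously on $(\tau,z)$ when indexed by symbolic labels, the cones $Df_\tau^n(\zeta)\mathscr K_R$ depend continuously on $(\tau,\zeta)$, and $v \in Df_\tau^n(\zeta)\mathscr K_R$ is a closed condition. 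Hence $\{\tau : \ncal(\tau,R;n) < \varrho^n\}$ is open, and so is the union $\mathscr V_\varrho$.

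For density, a direct calculation using $f^n(x,s) = (E^n x,\,s + T_n(x))$ with $T_n = \sum_{k=0}^{n-1}\tau\circ E^k$ identifies $Df^n(\zeta)\mathscr K_R$ at $\zeta=(x',s')$, $x'\in E^{-n}(x)$, as the slope-interval of half-width $\vartheta_R/(E^n)'(x')$ centred at $\mu(x') = T_n'(x')/(E^n)'(x')$. Consequently
\begin{equation*}
\ncal(\tau,R;n) = \sup_{x \in \Sone}\sup_{\mu\in\R}\#\bigl\{x' \in E^{-n}(x) : \lvert\mu - \mu(x')\rvert \le \vartheta_R/(E^n)'(x')\bigr\}.
\end{equation*}
The $\ell^n$ slopes lie in $[-\vartheta_\tau,\vartheta_\tau]$ and $\sum_{x'\in E^{-n}(x)}(E^n)'(x')^{-1}$ is bounded uniformly in $n$ (being the $n$-th Perron--Frobenius iterate of $1$), so the $\ell^n$ slope-intervals have bounded total length and their \emph{average} multiplicity is $O(1)$. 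The task is to bound the \emph{supremum}, which a priori could be of order $\ell^n$, by $\varrho^n$ for some $n$ after a small $\Ci^r$ perturbation.

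Fix $\tau_0$ and $\delta>0$, and consider $\tau_\varepsilon = \tau_0 + \varepsilon\psi$ with $\|\varepsilon\psi\|_{\Ci^r}<\delta$; the slope $\mu(x')$ then moves linearly in $\varepsilon$ with velocity $\mu_\psi(x') = \sum_{k=0}^{n-1}\psi'(E^k x')/(E^{n-k})'(E^k x')$. Choosing $\psi$ with spatial frequencies of order $\lambda^n$ (e.g.~$c\sin(2\pi Nx)$ with $N\gg\lambda^n$) makes the $\ell^n$ velocities $\{\mu_\psi(x')\}_{x'}$ essentially independent across preimages, since the forward orbits of distinct preimages separate at scale $\lambda^{-n}$ while $\psi'$ varies at the finer scale $N^{-1}$. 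A Fubini / union-bound argument then proceeds as follows: cover the parameter space $\Sone\times[-\vartheta_\tau,\vartheta_\tau]$ by polynomially many cells of size $\lambda^{-n}$; in each cell the Lebesgue measure of the set of bad $\varepsilon$ (those for which the clustering exceeds $\varrho^n$) is exponentially small thanks to the transversality; a union bound then leaves an open set of admissible $\varepsilon$ once $n$ is sufficiently large, yielding $\tau_\varepsilon\in\mathscr V_\varrho$. The \emph{main obstacle} is precisely this uniform-in-$(x,\mu)$ spreading estimate: an adversarial $\mu$ could a priori align many slope-intervals, and the high-frequency perturbation is what decouples the slopes across distinct preimages to rule this out generically in $\varepsilon$.
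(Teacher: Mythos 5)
Your overall skeleton---openness from upper semi-continuity of $\tau\mapsto\ncal(\tau,R;n)$ together with sub-multiplicativity, density via a perturbation plus a measure/union-bound argument on the slopes $S_n(x;\alpha)=T_n'(x_\alpha)/(E^n)'(x_\alpha)$, and Baire category at the end---matches the paper. But the density step, which is the entire content of the theorem, has a genuine gap at its core: the claim that a single high-frequency perturbation $\varepsilon\, c\sin(2\pi Nx)$ with $N\gg\lambda^n$ makes the $\ell^n$ velocities $\mu_\psi(x')$ ``essentially independent across preimages'' is false, and even if it were true the quantitative budget does not close. First, the velocities are \emph{hierarchically correlated}, not independent: writing $\mu_\psi(x')=\sum_{k=0}^{n-1}\psi'(E^kx')/(E^{n-k})'(E^kx')$, the term of index $k$ carries weight $\approx\lambda^{-(n-k)}$, so two preimages whose itineraries agree on the last $n-k_0$ symbols have velocity difference $O(\lVert\psi'\rVert_\infty\lambda^{-(n-k_0)})$; the dominant, order-one part of the velocity takes only $O(\ell^{q})$ distinct values among preimages sharing a length-$q$ recent itinerary. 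Second, the $\Ci^r$ constraint ($r\ge2$) forces $|\varepsilon|c\lesssim\delta N^{-r}$, so the slopes move by at most $O(\delta N^{1-r})\ll\lambda^{-(r-1)n}$, which is dominated by the cone half-widths $\vartheta_R((E^n)'(x'))^{-1}\in[\vartheta_R\Lambda^{-n},\vartheta_R\lambda^{-n}]$; the perturbation is too weak to separate clusters at the relevant scale. Third, even granting some decoupling, a one-parameter family yields a bad-$\varepsilon$ set whose measure per configuration is at best a single small factor, which cannot beat the union bound over the exponentially many (in $n$ times the target multiplicity) configurations of preimages.

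The paper's proof is built precisely to circumvent these three obstructions, and none of its ingredients appear in your proposal. It uses a \emph{fixed} finite-dimensional family $\tau_{\mathbf t}=\tau+\sum_{i=1}^m t_i\varphi_i$ of smooth, low-frequency perturbations and only asks to separate $N$ words $\alpha$ whose length-$q$ prefixes $[\alpha]_q$ are pairwise distinct---exactly the direction in which perturbations act with order-one strength; Proposition \ref{prop:3.4} (after Tsujii) then gives $\Jac(G_{x,A})\ge1$ for the joint $N$-dimensional slope map, so the bad parameter set has measure $\le Ce^{-a_jnN}$, with the exponent proportional to $N$. The entropy of configurations is controlled by Lemma \ref{lm:distortion} (the distortion bound \eqref{eq:distortion}), which you invoke only for the average multiplicity, not for counting preimages with $(E^n)'\le e^{b_jn}$; the stratification into derivative windows $e^{nI_j}$ and the pigeonhole extraction of $2(q+1)N$ prefixes each carrying $\gtrsim e^{\rho n}\ell^{-q}$ words (Proposition \ref{prop:3.3}) are what make $N$ large enough, independently of $n$, for $e^{-a_jnN}\cdot e^{b_jnN}\cdot e^{2n\log\lceil2\Lambda\rceil}$ to decay. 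Without a multi-parameter family and a Jacobian lower bound of this type, the union bound in your plan does not close.
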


\section{Proof of Theorem \ref{conj:maingoal}}

We henceforth fix $R$ and sometimes drop it from the notation.

\subsection{Notation}

Let $\mathcal A =\{ 0,\ldots ,\ell-1\}$. 
We suppose that $0\in \Sone$ is one of the fixed points of $E$.
Let $\mathcal I(j)=[\alpha_j,\beta_j)\subset \Sone$, $j\in \mathcal{A}$, be the semi-open intervals obtained by dividing $\Sone$ at the $\ell$ points in $E^{-1}(0)$, so that $E$ maps each of them onto $\Sone$ bijectively. 
For a word $\alpha =(\alpha _n,\ldots ,\alpha _2,\alpha _1) \in \mathcal A^n$ with alphabet $\mathcal{A}$ and of length $|\alpha|:=n$, let $\mathcal I (\alpha)$ be the interval defined by
\begin{equation*}
\mathcal I(\alpha ) = \bigcap ^{n-1} _{j=0} E^{-j} (\mathcal I (\alpha _{n-j})) .
\end{equation*}
Clearly $E^n$ maps each $\mathcal I(\alpha )$ with $|\alpha|=n$ onto $\Sone$ bijectively. 
 For each $1\leq p\leq n$, we define $[\alpha ]_p\in \mathcal A^p$ to be the truncation $(\alpha _p,\ldots ,\alpha _1)$. For each $x\in \Sone$ and $\alpha\in \mathcal{A}^n$, we write $x_{\alpha}$ for the unique point in $\mathcal I(\alpha ) $ that is mapped to $x$ by $E^n$. 
 The differential $Df(z)$ actually depends only on the first component $x$ of $z=(x,s)\in \Torus$.
Thus we will sometimes write $Df(x)$ by abuse of notation. We also consistently identify the pre-images $f^{-n}(z)$ and $E^{-n}(x)$ for $z=(x,s)$.

\subsection{A difficulty caused by the nonlineairity of $E$}
To prove Theorem \ref{conj:maingoal}, we resolve by perturbations  
the situation where a unit vector $v_0\in \mathbb{R}^2$ is contained in many of  the cones in (\ref{eq:imagecones}). 
To this end we note that, because of the nonlinearity of $E$, there is much variation in the angles of the cones 
$Df^n(x_{\alpha}) \mathscr{K}_R$ for $\alpha\in \mathcal{A}^n$. As one can easily imagine, 
the larger the angle of $Df^n(x_{\alpha}) \mathscr{K}_R$, the more perturbation is needed to resolve the situation 
$v_0\in Df^n(x_{\alpha}) \mathscr{K}_R$ for some fixed $v_0$. This prevents us from applying the argument in \cite{Tsujii} literally. 
But this difficulty is compensated by the fact that
there are relatively few $\alpha\in \mathcal{A}^n$ for which $Df^n(x_{\alpha}) \mathscr{K}_R$ is larger than average. More precisely, we have the following lemma. Note that the angle of $Df^n(x_{\alpha}) \mathscr{K}_R$ is proportional to the reciprocal of $(E^n)'(x_{\alpha})$.

\begin{lem}\label{lm:distortion} There exists a constant $C>0$ such that, for any $x\in \Sone$ and $b>0$, 
\[
\#\{ y\in E^{-n}(x)\mid (E^n)'(y) \le e^{bn}  \} \le C e^{b n}. 
\]
\end{lem}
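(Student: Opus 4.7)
The plan is to combine the classical bounded distortion estimate for $C^2$ expanding maps on the circle with the obvious length bound that the branch intervals of level $n$ fit inside $\Sone$. Since $r\ge 2$, the function $\log E'$ is $C^1$ on the compact manifold $\Sone$, so pulling back along any inverse branch of $E^n$ and summing the resulting geometric series (with ratio $\lambda^{-1}<1$) gives a constant $C_0\ge 1$, independent of $n$, such that
\[
C_0^{-1}\le \frac{(E^n)'(y)}{(E^n)'(z)}\le C_0
\]
for every word $\alpha\in \mathcal A^n$ and every pair of points $y,z\in \mathcal I(\alpha)$.

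Next, since $E^n$ maps $\mathcal I(\alpha)$ bijectively onto $\Sone$, which has length $1$, the mean value theorem produces some $z\in \mathcal I(\alpha)$ with $(E^n)'(z)=1/|\mathcal I(\alpha)|$. Combining this with the distortion bound above yields the length--derivative comparison
\[
C_0^{-1}\le |\mathcal I(\alpha)|\cdot (E^n)'(y)\le C_0\qquad\text{for all } y\in \mathcal I(\alpha).
\]

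Finally, writing $E^{-n}(x)=\{x_\alpha : \alpha\in \mathcal A^n\}$ with $x_\alpha\in \mathcal I(\alpha)$, the comparison applied at $y=x_\alpha$ says that $(E^n)'(x_\alpha)\le e^{bn}$ implies $|\mathcal I(\alpha)|\ge C_0^{-1}e^{-bn}$. The intervals $\mathcal I(\alpha)$ with $|\alpha|=n$ are pairwise disjoint subsets of $\Sone$, so the sum of their lengths is at most $1$. Hence
\[
\#\{\alpha\in \mathcal A^n : (E^n)'(x_\alpha)\le e^{bn}\}\cdot C_0^{-1}e^{-bn}\le 1,
\]
which gives the stated bound with $C=C_0$.

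There is no substantial obstacle: the only point requiring care is the uniformity in $n$ of the distortion constant, and this is a standard consequence of $E\in C^2$ together with uniform expansion. I expect the lemma to occupy just a few lines in the paper.
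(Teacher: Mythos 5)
Your proof is correct and follows essentially the same route as the paper: the paper's one-line argument establishes $C^{-1}\le\sum_{y\in E^{-n}(x)}(E^n)'(y)^{-1}\le C$ via "a simple distortion argument along the backward orbits" and then counts, and your length--derivative comparison $C_0^{-1}\le|\mathcal I(\alpha)|\cdot(E^n)'(x_\alpha)$ together with $\sum_\alpha|\mathcal I(\alpha)|\le 1$ is exactly that distortion argument spelled out, with the same Chebyshev-type counting at the end. No issues.
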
 
\begin{proof} By a simple distortion argument along the backward orbits, we have that  
\begin{equation}\label{eq:distortion}
C^{-1}\le \sum_{y\in E^{-n}(x)} (E^n)'(y)^{-1}\le C\qquad\text{for each $x\in \Sone$}
\end{equation}
for a constant $C\ge 1$. Then the lemma is a direct consequence. 
\end{proof}

\subsection{Consequences of  the condition $\ncal(\tau) \ge e^{\rho}$} 
\label{ss:consequence}
Following the argument
in Tsujii~\cite{Tsujii}, we begin by analyzing the situation where condition \eqref{eq:genericconditionvariant} does not hold, that is to say, when $\ncal(\tau) \ge e^{\rho}$ for some $\rho>0$. 
To proceed with the idea described in the previous subsection, we cover the interval $[\log\lambda, \log \Lambda]$ with open intervals $I_{j}=(a_j,b_j)$, $1\le j\le J$, such that $|I_j|:=|b_j-a_j|< \rho/3$. 
We set
\[
N=N(\rho):=\left\lceil 6\rho^{-1}\log\lceil 2 \Lambda \rceil\right\rceil
\]
where $\lceil t\rceil$ denotes the smallest integer $\ge t$ for $t\in\R$.
Then we choose an integer $q$ so large that 
\[
(q+1) N \cdot e^{-q\rho/2}<1/(4J). 
\]
Let $e^{nI}=[e^{a n},e^{bn}]$ for $I=[a,b]$.

\begin{prop}\label{prop:3.3}
If $\ncal(\tau) \ge e^{\rho}$, then we can find an arbitrarily large $n$ and 
\begin{itemize}
\item a point $z_{0}=(x,s)\in \Torus$, 
\item a unit vector $v_0\in \mathbb{R}^2$,
\item an integer $1\le j\le J$,  
\item a subset $B\subset \mathcal{A}^q$ with $\#B=2(q+1) N$, 
\item subsets
$\Sigma(\beta)\subset \mathcal{A}^n$ for each $\beta\in B$ with $\#\Sigma(\beta)\ge  e^{\rho n} \cdot \ell^{-q}/(2J)$
\end{itemize}
 such that 
\begin{itemize}
\item $[\alpha]_q=\beta$ for $\alpha\in \Sigma(\beta)$,
\item $v_0\in Df^n(x_\alpha) \mathscr{K}_R$ and $ (E^n)'(x_\alpha)\in e^{n I_j }$ for $\alpha\in \Sigma(\beta)$ with $\beta\in B$. 
\end{itemize}
\end{prop}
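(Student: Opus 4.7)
My plan is to translate $\ncal(\tau)\ge e^\rho$ into many inverse branches that share a common vector in their image cones, then refine the set of such branches twice by pigeonhole (first by the $E^n$-derivative scale, then by the length-$q$ prefix), and finally invoke Lemma~\ref{lm:distortion} twice to bound the number of surviving prefixes from below.

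From Fekete's lemma the hypothesis gives $\ncal(\tau,R;n)\ge e^{\rho n}$ for every $n$, and since each cone $Df^n(\zeta)\mathscr K_R$ is closed the supremum defining $\ncal(\tau,R;n)$ is attained. Thus one can pick $z_0=(x,s)\in\Torus$ and a unit vector $v_0$ with
\[
\Sigma_0:=\{\alpha\in\mathcal A^n:v_0\in Df^n(x_\alpha)\mathscr K_R\},\qquad \#\Sigma_0\ge e^{\rho n}.
\]
Arranging the $I_j$'s so as to form a partition of $[\log\lambda,\log\Lambda]$ and setting $\Sigma_j:=\{\alpha\in\Sigma_0:(E^n)'(x_\alpha)\in e^{nI_j}\}$, pigeonhole yields some $j\in\{1,\ldots,J\}$ with $\#\Sigma_j\ge e^{\rho n}/J$; Lemma~\ref{lm:distortion} applied to $E^n$ forces $b_j\ge \rho-\log(CJ)/n$, so $b_j$ is essentially $\rho$. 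Partitioning further by prefix, $\Sigma(\beta):=\{\alpha\in\Sigma_j:[\alpha]_q=\beta\}$, and putting $M:=e^{\rho n}\ell^{-q}/(2J)$ and $B':=\{\beta\in\mathcal A^q:\#\Sigma(\beta)\ge M\}$, the elementary estimate $\sum_{\beta\notin B'}\#\Sigma(\beta)<\ell^q M=e^{\rho n}/(2J)$ delivers
\[
\sum_{\beta\in B'}\#\Sigma(\beta)\ge \frac{e^{\rho n}}{2J}.
\]

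To lower-bound $\#B'$ I exploit the factorization $(E^n)'(x_\alpha)=(E^q)'(x_\beta)\cdot(E^{n-q})'(x_\alpha)$, which upon applying Lemma~\ref{lm:distortion} to $E^{n-q}$ based at $x_\beta$ yields
\[
\#\Sigma(\beta)\le \frac{Ce^{b_jn}}{(E^q)'(x_\beta)}.
\]
I then split $B'=B'_\le\cup B'_>$ according as $(E^q)'(x_\beta)\le e^{q\rho/2}$ or not; a second application of Lemma~\ref{lm:distortion}, this time to $E^q$ at $x$, bounds $\#B'_\le\le Ce^{q\rho/2}$. Inserting the two estimates into the displayed mass bound and dividing by $e^{b_jn}$ (with $b_j\approx\rho$) yields, up to universal constants,
\[
\frac{1}{2J}\le \frac{C^2e^{q\rho/2}}{\lambda^q}+\#B'_>\cdot Ce^{-q\rho/2}.
\]
The first term on the right is made arbitrarily small by taking $q$ large, and the stipulated calibration $(q+1)Ne^{-q\rho/2}<1/(4J)$ then forces $\#B'_>\ge 2(q+1)N$; one finishes by taking $B$ to be any subset of $B'_>$ of size exactly $2(q+1)N$.

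The main obstacle is precisely this last counting step: squeezing $\#B'_>\ge 2(q+1)N$ out of the two distortion estimates. The constraint $|I_j|<\rho/3$ on the partition is used to pin $b_j$ close to $\rho$, while the two applications of Lemma~\ref{lm:distortion}—one to bound $\#\Sigma(\beta)$ through the expansion along the prefix, the other to count prefixes with small $(E^q)'(x_\beta)$—must be balanced delicately against the specific choice of $q$ and $N$ baked into the hypothesis.
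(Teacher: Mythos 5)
There is a genuine gap at the final counting step. Your lower bound from Lemma \ref{lm:distortion} gives only $b_j\ge \rho-\log(CJ)/n$; it does \emph{not} give $b_j\le\rho+o(1)$, and in fact $b_j\ge\log\lambda$ always, which can vastly exceed $\rho$ (the relevant case for the theorem is small $\rho$, since one lets $\varrho\to1$). Carrying the factor $e^{b_jn}$ through honestly, your two estimates give
\[
\frac{e^{(\rho-b_j)n}}{2J}\le \frac{C^2e^{q\rho/2}}{\lambda^q}+\#B'_>\cdot Ce^{-q\rho/2},
\]
and when $b_j>\rho$ the left-hand side tends to $0$ as $n\to\infty$, so no lower bound on $\#B'_>$ follows. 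The underlying problem is that your bound $\#\Sigma(\beta)\le Ce^{b_jn}/(E^q)'(x_\beta)$ is measured against the derivative scale $e^{b_jn}$ rather than against the actual count $\ncal(\tau;n)\approx e^{\rho n}$, so it cannot exclude the a priori possible scenario that essentially all of $\Sigma_j$ sits over a single prefix $\beta$ (note $\ell^{n-q}\gg e^{\rho n}$ for small $\rho$, so cardinality alone does not forbid this), in which case $\#B'=1$.

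The paper closes exactly this hole with an idea absent from your argument: since $[\alpha]_q=\beta$ implies $v_0\in Df^n(x_\alpha)\mathscr K_R$ iff $(Df^q(x_\beta))^{-1}v_0\in Df^{n-q}(x_\alpha)\mathscr K_R$, one gets the cone-theoretic bound $\#\Sigma(\beta)\le\ncal(\tau;n-q)$, and then one \emph{chooses} $n$ (this is why the statement says ``arbitrarily large $n$'' rather than ``all $n$'') so that $\ncal(\tau;n)>e^{(\rho/2)q}\ncal(\tau;n-q)$ --- such $n$ exist by sub-multiplicativity, else $\ncal(\tau)\le e^{\rho/2}$. This makes every $\#\Sigma(\beta)$ at most $e^{-(\rho/2)q}\ncal(\tau;n)$, directly comparable to the total mass $\ncal(\tau;n)/J$, and the pigeonhole then yields $2(q+1)N$ prefixes each carrying at least $\ncal(\tau;n)\ell^{-q}/(2J)\ge e^{\rho n}\ell^{-q}/(2J)$. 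Your first two reductions (choice of $z_0,v_0,j$ and the prefix decomposition) match the paper; the distortion lemma, however, is not used in this proposition at all --- it enters only later, in Subsection \ref{subsection:endproof}, to count the admissible combinations $(\alpha_1,\dots,\alpha_N)$.
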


\begin{proof}
Since $\ncal(\tau)\ge e^\rho$ we can find an arbitrarily large $n$ such that 
\begin{equation*}
\ncal(\tau;n)> e^{(\rho/2)q}\cdot  \ncal(\tau;n-q).
\end{equation*}
By definition, we can find a point $z_0=(x,s)$ and a unit vector $v_0\in \mathbb{R}^2$ such that 
\begin{equation*}
\#\{ \alpha\in \mathcal{A}^n\mid v_0\in Df^n(x_\alpha) \mathscr K_R \} =\ncal(\tau;n).
\end{equation*}
Hence, there is a $1\le j\le J$ such that the set 
\[
B'=\{ \alpha\in \mathcal{A}^n\mid v_0\in Df^n(x_\alpha) \mathscr K_R, \ (E^n)'(x_\alpha)\in e^{n I_j} \}
\]
satisfies $\# B'\ge \ncal(\tau;n)/J$.
We divide the set $B'$ into
\[
\Sigma(\beta)=\{ \alpha\in B' \mid [\alpha]_q=\beta\}\qquad\text{for $\beta\in \mathcal{A}^q$.}
\]
Note that, for $\alpha\in \mathcal{A}^n$ with $[\alpha]_q=\beta$, we have $v_0\in Df^n(x_\alpha) \mathscr K_R$ if and only if $(Df^q(x_{\beta}))^{-1}(v_0)\in Df^{n-q}(x_{\alpha}) \mathscr K_R$. This implies
\begin{equation*}
\#\Sigma(\beta)\le \ncal(\tau;n-q)\le e^{-(\rho/2)q}\cdot \ncal(\tau;n).
\end{equation*}
Note also that we obviously have
\[
\sum_{\beta\in \mathcal{A}^q}\#\Sigma(\beta)=\#B'\ge  \ncal(\tau;n)/J.
\]
We now pick the $2(q+1) N$ largest sets among $\Sigma(\beta)$ for $\beta\in \mathcal A^q$, and let $B\subset \mathcal{A}^q$ be the corresponding $2(q+1) N$ elements in $\mathcal{A}^q$.
By the estimates above we have
\[
\#\Sigma(\beta)\ge \ncal(\tau;n)\cdot \ell^{-q}/(2J)\quad\text{for $\beta\in B$},
\]  
because the average of $\#\Sigma(\beta)$ over the rest $\mathcal{A}^q\setminus B$ must be bounded from below by
\[
\ell^{-q}\left(\ncal(\tau;n)/J-e^{-(\rho/2)q}\cdot \ncal(\tau;n)\cdot  2(q+1) N\right)
\ge \ncal(\tau;n)\cdot \ell^{-q}/(2J)
\]
where the last inequality follows from the choice of $q$.
Since $\ncal(\tau;n)$ is sub-multi\-plicative, we have $\ncal(\tau;n)\ge e^{\rho n}$ for all $n\ge 1$, which completes the proof.
\end{proof}

Below we rewrite the conclusion of Proposition \ref{prop:3.3} in a form that is more suitable for the perturbation argument in the next subsection. By choosing $\varepsilon>0$ so small that the intervals $I_j^\prime=(a_j+\varepsilon, b_j-\varepsilon)$ for $1\le j\le J$ still cover $[\log \lambda, \log \Lambda]$, we may assume that the conclusion of Proposition \ref{prop:3.3} holds with $I_j$ replaced by $I_j^\prime\Subset I_j$. Next, we rewrite the condition $v_0\in Df^n(x_\alpha) \mathscr{K}_R$.
Let us define 
\begin{equation}\label{eq:solutiontocohomologicaltruncated}
S_n(x;\alpha)=\sum_{k=1}^n \frac{\tau'(x_{[\alpha]_k})}{(E^{k})'(x_{[\alpha]_k})}.
\end{equation}
This is nothing but the slope of the image of the horizontal line by $Df^{n}(x_{\alpha})$.
In particular, $v_0\in Df^n(x_\alpha) \mathscr{K}_R$ 
is equivalent to $\lvert S_n(x;\alpha)-S\rvert\le \vartheta_R\cdot ((E^n)'(x_\alpha))^{-1}$, where $S$ denotes the slope of $v_0$. Hence, the conditions 
$v_0\in Df^n(x_\alpha) \mathscr{K}_R$ and $(E^n)'(x_\alpha)\in e^{nI_j'}$ imply that
\begin{equation*}
\lvert S_n(x; \alpha) - S\rvert \leq \vartheta_R \cdot 
((E^{n})'(x_\alpha))^{-1}\le \vartheta_R \cdot e^{-(a_j+\varepsilon)n}.
\end{equation*}
Finally, we shift the first component $x$ of the point $z_0=(x,s)$ and the unit vector $v_0$ respectively to nearby points in the finite sets
\[
T(n)=\{x\in \Sone\mid \lceil 2\Lambda\rceil^nx\in \mathbb{Z}\}
\] 
and
\[
S(n)=\{(\cos 2\pi \theta, \sin 2\pi \theta) \in \Sone\mid \lceil 2\Lambda\rceil^n \theta\in \mathbb{Z}\}.
\]
This will change the values of $S_n(x; \alpha)$, $(E^n)'(x_\alpha)$ and the slope of $v_0$ slightly. However, it is easy to see that the grids $T(n)$ and $S(n)$ are fine enough for us to conclude that, after the shift, $(E^n)'(x_\alpha)$ belongs to the larger interval $e^{nI_j}$, and that $\lvert S_n(x; \alpha) -S \rvert \leq e^{-a_j n} \cdot \vartheta_R $, where $S$ is the slope of the shifted vector $v$. Hence, we can replace the parts 
\begin{center}
``a point $z_{0}\in \Torus$'', \quad 
 ``a unit vector $v_0\in \mathbb{R}^2$'',\quad ``$v_0\in Df^n(x_\alpha) \mathscr{K}_R$''
\end{center}
in the conclusion of Proposition \ref{prop:3.3} with
\begin{center}
``a point $x\in T(n)$'', \quad 
 ``a unit vector $v_0=(\cos \theta,\sin \theta)\in S(n)$'',\\ ``$\lvert S_n(x; \alpha) -\tan\theta \rvert \leq e^{-a_j n} \cdot \vartheta_R $''
\end{center}
respectively. 

\subsection{Generic perturbations}  
Let $\tau \in \mathscr C^r (\Sone)$. For perturbations of $\tau$, we will take a set of functions $\varphi _i \in \mathscr C^r (\Sone)$, $1\leq i\leq m$, and consider the family
\begin{equation*}
\tau _{\mathbf{t}} (x)=\tau (x) +\sum _{i=1}^mt_i \varphi _i(x)
\end{equation*}
parametrized by $\mathbf{t}=(t_i)_{i=1}^m\in \R^m$. 
For a point $x\in \Sone$, an integer $n \geq 1$ and a finite subset $A\subset \mathcal A^{n}$ with $\#A=p$, let $G_{x,A}:\R^m\to\R^p$ be the affine map defined by 
\begin{equation}\label{eq:defofG_x,A}
G_{x,A}(\mathbf{t})=\left(S_{n}(x;\alpha;\tau _{\mathbf{t}})\right)_{\alpha\in A}
\end{equation}
where we used the notation $S_{n}(x;\alpha;\tau )=S_{n}(x;\alpha)$ for the functions given by \eqref{eq:solutiontocohomologicaltruncated} to indicate the dependence on $\tau $. 
For an affine map $M:E\to F$ between Euclidean spaces, let $\Jac{(M)}$ be the modulus of the Jacobian determinant of $DM\vert _{\ker (DM)^\perp}$, the restriction of the linear part $DM$ to the orthogonal complement of its kernel when $M$ is surjective, and put $\Jac{(M)}=0$ otherwise. Obviously we have $\Jac{(M)}\ge \Jac{(M|_L)}$ for any subspace $L\subset E$. Also we have
\begin{equation}\label{eq:LebmeasureEF}
\Leb_E \{ z\in E\mid \|z\|\le 1, M(z)\in X\} \le C\Jac(M)^{-1}\Leb_{F}(X)
\end{equation}
where the constant $C$ depends only on the dimension of $E$. The next lemma is a slight generalization of \cite[Proposition 3.4]{Tsujii}.

\begin{prop}\label{prop:3.4}
For any integers $p\ge 1$ and $\nu\ge 1$, there are functions $\varphi _i \in \Ci^\infty(\Sone)$, $1\le i\le m$, such that the following holds true: For any $x\in \Sone$ and subset $B \subset \mathcal A^{\nu}$ with $\#B\ge p(\nu+1)$, there is a subset $B'\subset B$ with $\# B'=p$ such that we have
\[
\Jac{(G_{x,A})} \geq 1
\]
provided that $A$ is a subset of $\mathcal{A}^n$ with $n\ge \nu$ such that the map $\alpha\in A\mapsto [\alpha]_\nu\in B'$ is a one-to-one correspondence.
\end{prop}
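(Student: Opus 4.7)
The plan is to split each entry of the Jacobian matrix $DG_{x,A}$ into a ``main'' contribution depending only on $\beta=[\alpha]_\nu$ and a tail contribution from levels $k>\nu$, which is exponentially small in $\nu$; and then to use the slack ``$\#B\ge p(\nu+1)$ versus $\#B'=p$'' to combinatorially select $B'$ for which the main contribution is well-conditioned.

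\textbf{Step 1 (Decomposition).} Since $\tau_{\mathbf t}'=\tau'+\sum_i t_i\varphi_i'$, the map $G_{x,A}$ is affine in $\mathbf t$, with
\[
(DG_{x,A})_{\alpha,i}=\sum_{k=1}^n\frac{\varphi_i'(x_{[\alpha]_k})}{(E^k)'(x_{[\alpha]_k})}.
\]
Splitting the sum at $k=\nu$ and using the factorization $(E^{\nu+j})'(x_{[\alpha]_{\nu+j}})=(E^\nu)'(x_\beta)\cdot(E^j)'((x_\beta)_{[\tilde\alpha]_j})$, with $\beta=[\alpha]_\nu$ and $\tilde\alpha=(\alpha_n,\dots,\alpha_{\nu+1})$, one writes $DG_{x,A}=M+R'$ where
\[
M_{\beta,i}(x):=\sum_{k=1}^\nu\frac{\varphi_i'(x_{[\beta]_k})}{(E^k)'(x_{[\beta]_k})},\qquad |R'_{\alpha,i}|\le \frac{\|\varphi_i'\|_\infty}{(\lambda-1)(E^\nu)'(x_\beta)}.
\]
The main matrix $M=(M_{\beta,i})_{\beta\in B',\,i\le m}$ depends on $\alpha$ only through $\beta$, while $R'$ has entries of size $O(\lambda^{-\nu})$.

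\textbf{Step 2 (Design of $\varphi_i$ and selection of $B'$).} I would take $m$ large depending only on $p,\nu$, and choose $\varphi_i\in\Ci^\infty(\Sone)$ whose derivatives $\varphi_i'$ are sharply localized bumps arranged on a grid of scale $\eta\ll \Lambda^{-\nu}$, all scaled by a common factor $C^*>0$ to be fixed at the end. Under this scale, for any $x\in\Sone$, any $\beta\in\mathcal A^\nu$, and any $k\in\{1,\dots,\nu\}$, at most one index $i$ registers a nonzero value in $M_{\beta,i}$, so each row of $M$ has at most $\nu$ nonzero entries, sitting at known indices. Given $B\subset\mathcal A^\nu$ with $\#B\ge p(\nu+1)$, one then selects $B'\subset B$ with $\#B'=p$ by a greedy/pigeonhole procedure, keeping $\beta$ whenever its ``active'' index set is sufficiently disjoint from those of previously kept elements; the slack factor $\nu+1$ matches the number of resolution scales at which two rows can conflict (one per level $k=1,\dots,\nu$, plus a base case).

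\textbf{Step 3 (Conclusion and main obstacle).} For the selected $B'$, the $p\times m$ matrix $M$ contains a $p\times p$ submatrix that is essentially diagonal with entries of size $\sim C^*$, so its determinant is bounded below by $c_0(C^*)^p$ for a constant $c_0>0$ depending only on the bump profile and on $p,\nu$; by the Cauchy--Binet formula, $\Jac(M)\ge c_0(C^*)^p$. The tail perturbation being of size $O(C^*\lambda^{-\nu})$, standard determinant perturbation bounds absorb it and yield $\Jac(G_{x,A})=\Jac(DG_{x,A})\ge 1$ once $C^*$ is chosen sufficiently large as a function of $p,\nu$. The most delicate point is the quantitative combinatorial step: producing the bumps so that the ``one $i$ per $(\beta,k)$'' property holds uniformly in $x\in\Sone$, and showing that precisely the slack $\#B\ge p(\nu+1)$ is enough to carry the greedy selection through at every stage, given that each accepted $\beta$ can block further candidates through overlaps at any of the $\nu$ truncation levels.
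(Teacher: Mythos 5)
First, a point of reference: the paper does not actually write out a proof of Proposition \ref{prop:3.4}; it defers to \cite[Proposition 3.4]{Tsujii}, with the explicit caveat that the factor $\ell^k$ of the linear model there must be replaced by (a $\Ci^1$-approximation of) $(E^k)'$. Your overall strategy --- affine dependence on $\mathbf{t}$, splitting $DG_{x,A}$ into a part determined by $\beta=[\alpha]_\nu$ plus a tail of size $O(\lambda^{-\nu})$, localized bump functions, a combinatorial extraction of $B'$ exploiting the slack $\#B\ge p(\nu+1)$, and a lower bound on a $p\times p$ minor via $\Jac(M)\ge\Jac(M|_L)$ --- is indeed the intended one. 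But two of the key steps do not go through as written.

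The first gap is the selection of $B'$. Requiring ``sufficiently disjoint active index sets'' cannot work: if $[\beta]_k=[\beta']_k$ then $x_{[\beta]_k}=x_{[\beta']_k}$, so $\beta$ and $\beta'$ necessarily activate the \emph{same} bump at level $k$. If all words of $B$ share the same last letter (such a $B$ can have $\ell^{\nu-1}\ge p(\nu+1)$ elements), no two of them have disjoint active sets, your greedy procedure stalls after one element, and the selected block is not ``essentially diagonal'' --- all rows carry an identical level-one entry, which moreover carries the \emph{largest} weight $1/E'$. What is actually needed is weaker and different: for each selected $\beta_j$ a single \emph{private} column, i.e.\ a bump met by the orbit segment $\{x_{[\beta_j]_k}\}_{k\le\nu}$ but by none of the segments of the other selected words; the factor $\nu+1$ enters because each word can obstruct at most $\nu+1$ candidate private columns, and the resulting minor is diagonal only after restricting to those $p$ columns. (The need for any selection at all comes from the degenerate configurations you must handle uniformly in $x$: distinct preimages in $E^{-\nu}(x)$ can be arbitrarily close near the branch points, and points $x_{[\beta]_k}$ can coincide across levels when $x$ is periodic, so no fixed grid scale $\eta$ separates them.) The second gap is the closing perturbation step. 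Choosing $C^*$ large is circular: the tail $R'$ is built from the same $\varphi_i'$, so the diagonal entries and the tail both scale linearly in $C^*$ and their ratio is unaffected; what must be established is a uniform positive lower bound for the \emph{unscaled} Jacobian. With uniform-height bumps this is not delivered by diagonal dominance: a private column found at level $k_j\le\nu$ yields a diagonal entry as small as $\sim C^*\Lambda^{-\nu}$, while deep preimages $x_{[\alpha]_k}$ with $k>\nu$ ($n$ being arbitrary) can land in that same bump and contribute off-diagonal entries of size up to $\sim C^*\lambda^{-\nu}/(\lambda-1)$, which dominates when $\Lambda>\lambda$. This is exactly the difficulty the paper's remark points to: for nonlinear $E$ the bump heights must be weighted by the local expansion $(E^k)'$ so that the relevant diagonal entries are of unit size, and even then the returns of the backward orbit to the supports for $k>\nu$ need a genuine argument rather than a generic ``determinant perturbation bound''.
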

\begin{remark}
We omit the proof of Proposition \ref{prop:3.4}, because it is obtained by translating  that of \cite[Proposition 3.4]{Tsujii} almost literally. 
Note that the map $E$ is now a general expanding map though it was linear $x\mapsto \ell x$ in \cite[Proposition 3.4]{Tsujii} and that we have to replace the factor $\ell^k$ in some places by (a $\Ci^1$-approximation\footnote{Since the derivative of $E^k$ is $\Ci^{r-1}$, we actually have to approximate it by a $\Ci^\infty$ function.} of) the derivative of
$E^k$. (Also note that $x_\alpha$ was denoted $\alpha(x)$ in \cite{Tsujii}.)
\end{remark}

\subsection{The end of the proof}\label{sub:s1}\label{subsection:endproof}
We take an arbitrary bounded open subset $D$ of $\Ci^r (\Sone)$. 
Then we take the constant $R>0$ such that 
$R>\lVert\tau'\rVert_\infty$ uniformly for $\tau\in D$. 
We also take $\rho>0$ arbitrarily and let $X_\rho$ be the set of $\tau\in D$ for which $\ncal(\tau)\ge  e^\rho$. For the proof of Theorem \ref{conj:maingoal}, it is enough to show that the complement of $X_\rho$ in $D$ contains a dense subset because the condition $\ncal(\tau)<e^\rho$ is an open condition on $\tau$. 
(Recall that $\ncal(\tau,R;n)$ is sub-multiplicative.) 
Let $X(n)\subset D$ be the set of $\tau\in D$ for which the conditions in the statement of  Proposition \ref{prop:3.3} (modified as described at the end of Subsection \ref{ss:consequence}) hold for $n$. 
Suppose $\tau\in D$. To define the family $\tau_\mathbf{t}$, we take the functions $\varphi_i\in \mathscr C^\infty (\Sone)$, $1\le i\le m$, in Proposition \ref{prop:3.4} by choosing $p=N$ and $\nu=q$. 
Theorem \ref{conj:maingoal} follows from the next proposition.

\begin{prop}
$\displaystyle 
\Leb_{\mathbb{R}^m}\{ \mathbf{t}\in [-1,1]^m \mid \tau_{\mathbf{t}}\in X(n)\text{ for infinitely many $n$}\} =0.$
\end{prop}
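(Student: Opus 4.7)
The plan is a Borel--Cantelli argument: it suffices to show
\[
\sum_{n\ge 1}\Leb_{\R^m}(Y(n))<\infty,\qquad Y(n):=\{\mathbf{t}\in[-1,1]^m:\tau_{\mathbf{t}}\in X(n)\}.
\]
Fix $n$ and suppose $\mathbf{t}\in Y(n)$. The modified form of Proposition \ref{prop:3.3} at the end of Subsection \ref{ss:consequence} furnishes $x\in T(n)$, a unit vector $v_0=(\cos\theta,\sin\theta)\in S(n)$, an index $1\le j\le J$, $B\subset\mathcal{A}^q$ with $\#B=2(q+1)N$, and, for each $\beta\in B$,
\[
\Sigma_\beta(\mathbf{t}):=\{\alpha\in\mathcal{A}^n:[\alpha]_q=\beta,\ (E^n)'(x_\alpha)\in e^{nI_j},\ |S_n(x;\alpha;\tau_{\mathbf{t}})-\tan\theta|\le\delta\}
\]
with $\#\Sigma_\beta(\mathbf{t})\ge M$, where $\delta:=\vartheta_R\,e^{-a_j n}$ and $M:=e^{\rho n}\ell^{-q}/(2J)$. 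Denoting by $Y(x,v_0,j,B)$ the set of $\mathbf{t}\in[-1,1]^m$ for which $\#\Sigma_\beta(\mathbf{t})\ge M$ holds simultaneously for all $\beta\in B$, a union bound over the at most $\lceil 2\Lambda\rceil^{2n}\cdot J\cdot\binom{\ell^q}{2(q+1)N}$ admissible tuples $(x,v_0,j,B)$ reduces the problem to a uniform upper bound for $\Leb(Y(x,v_0,j,B))$.

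For fixed $(x,v_0,j,B)$, the plan is to apply Proposition \ref{prop:3.4} with $p=N$ and $\nu=q$ to secure a subset $B'\subset B$ of cardinality $N$ such that $\Jac(G_{x,A})\ge 1$ whenever $A\subset\mathcal{A}^n$ is carried bijectively onto $B'$ by truncation $[\,\cdot\,]_q$. For any such $A$, the box-preimage
\[
U(A):=\{\mathbf{t}\in[-1,1]^m:|S_n(x;\alpha;\tau_{\mathbf{t}})-\tan\theta|\le\delta\text{ for every }\alpha\in A\}
\]
has measure at most $C(2\delta)^N$ by \eqref{eq:LebmeasureEF}. The decisive maneuver is a Chebyshev-type inequality: on $Y(x,v_0,j,B)$ we have $\prod_{\beta\in B'}\#\Sigma_\beta(\mathbf{t})\ge M^N$, whereas by Fubini
\[
\int_{[-1,1]^m}\prod_{\beta\in B'}\#\Sigma_\beta(\mathbf{t})\,d\mathbf{t}=\sum_{A}\Leb(U(A))\le (Ce^{b_j n})^N\cdot C(2\delta)^N,
\]
the sum running over all $N$-tuples $A=(\alpha_\beta)_{\beta\in B'}$ with $[\alpha_\beta]_q=\beta$ and $(E^n)'(x_{\alpha_\beta})\in e^{nI_j}$, whose number is bounded by $(Ce^{b_j n})^N$ thanks to Lemma \ref{lm:distortion}. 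Dividing gives
\[
\Leb(Y(x,v_0,j,B))\le C_1 e^{(b_j-a_j-\rho)Nn}\le C_1 e^{-2\rho Nn/3},
\]
since $|I_j|<\rho/3$.

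Combining, $\Leb(Y(n))\le C_2\lceil 2\Lambda\rceil^{2n}e^{-2\rho Nn/3}$, and the choice $N\ge 6\rho^{-1}\log\lceil 2\Lambda\rceil$ forces $2\rho N/3\ge 4\log\lceil 2\Lambda\rceil$, so the bound is summable in $n$ and Borel--Cantelli yields the proposition. The main obstacle is the Chebyshev step itself: a naive union bound over single tuples $A$, using only $\Leb(U(A))\le C(2\delta)^N$, already produces an estimate $\lesssim e^{(b_j-a_j)Nn}$ which actually \emph{grows} with $n$ and is of no use. One must exploit the massive cardinality $\#\Sigma_\beta(\mathbf{t})\ge M=e^{\rho n}\ell^{-q}/(2J)$ through the averaging above to extract the compensating factor $M^{-N}=e^{-\rho Nn}\cdot(\mathrm{const})^N$; this is exactly what defeats the counting factor $(e^{b_j n})^N$ and still leaves enough margin to absorb the $\lceil 2\Lambda\rceil^{2n}$ union bound over $(x,v_0)$.
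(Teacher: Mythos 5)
Your argument is correct and follows essentially the same route as the paper: the same decomposition over $(x,\theta,j,B)$, the same use of Proposition \ref{prop:3.4} with $p=N$, $\nu=q$ to obtain $B'$ and the Jacobian bound, and the same first-moment (Chebyshev/Fubini) step — which the paper phrases as ``$\tau_{\mathbf{t}}$ belongs to $X(n;x,\theta,j,B)$ only if the condition holds for at least $(\#\Sigma(\beta))^N$ of the combinations'' before dividing by $(\#\Sigma(\beta))^N$ — followed by the counting via Lemma \ref{lm:distortion} and Borel--Cantelli. Your explicit identification of the averaging step as the decisive maneuver is exactly the mechanism the paper relies on.
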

\begin{proof} 
We focus on how quantities below depend on $n$ and use $C$ to denote generic constants which do not depend on $n$. 
For given $n$, a point $x\in T(n)$,  a vector $(\cos\theta,\sin\theta)\in S(n)$,  an integer $1\le j\le J$ and a subset $B\subset \mathcal{A}^q$ with $\#B=2(q+1) N$, let $X(n; x,\theta,j,B)$ be the set of $\tau\in D$ for which the conditions in the (modified) statement of  Proposition \ref{prop:3.3} hold.
Let $B'\subset B$ with $\# B'=N$ be the subset in Proposition \ref{prop:3.4}. (We suppose that $B'$ is chosen uniquely for each $B$.) 
Let $\Sigma(\beta)\subset \mathcal{A}^n$ be those in Proposition \ref{prop:3.3}, but considered now only for $\beta\in B'$.
From the choice of the functions $\varphi_i\in \mathscr C^\infty (\Sone)$, 
we have
\[
\Leb\{ \mathbf{t}\in [-1,1]^m\mid  \lvert S_n(x; \alpha_i;\tau_\mathbf{t}) -\tan\theta \rvert \leq e^{-a_j n}\cdot \vartheta_R\}\le C e^{-a_j n N}
\] 
for any combination $(\alpha_1,\alpha_2, \cdots, \alpha_N)$ of elements in $\mathcal{A}^n$ such that
\begin{itemize}
\item  $(E^n)'(x_{\alpha_i})\in e^{nI_j}=[e^{a_j n},e^{b_j n}]$, 
\item  $[\alpha_i]_{q}$, $1\le i\le N$, are in one-to-one correspondence with the elements of $B'$.
\end{itemize}
(This is simply a result of \eqref{eq:LebmeasureEF} applied to a translate of \eqref{eq:defofG_x,A}.)
On the one hand, the number of possible such combinations $(\alpha_1,\alpha_2, \cdots, \alpha_N)$ is at most $C e^{b_j nN}$ by Lemma \ref{lm:distortion}.  On the other hand,  $\tau_\mathbf{t}$ belongs to $X(n; x,\theta,j,B)$ only if the condition 
\[
\lvert S_n(x; \alpha_i;\tau_\mathbf{t}) -\tan\theta \rvert \leq e^{-a_j n}\cdot \vartheta_R
\]
holds for at least $(\# \Sigma(\beta))^N$ of these possible combinations. 
Therefore we have 
\begin{align*}
\Leb\{ \mathbf{t}\in [-1,1]^m\mid \tau_{\mathbf{t}}\in  X(n; x,\theta,j,B) \}&
\le 
\frac{C e^{-a_j n N}\cdot C e^{b_j nN}}{(\# \Sigma(\beta))^N }
\\
&\le 
\frac{C e^{(b_j-a_j) n N}}{ (e^{\rho n} \cdot \ell^{-q}/(2J))^{N}}.
\end{align*}
Taking the number of possible choices for $x$, $\theta$, $j$ and $B$ into account, we obtain
\[
\Leb\{ \mathbf{t}\in [-1,1]^m \mid \tau_{\mathbf{t}}\in  X(n) \}
\le 
\frac{C e^{(b_j-a_j) n N}\cdot e^{2n \log \lceil 2\Lambda\rceil}}{e^{\rho nN}}\le Ce^{-(\rho/3)nN}
\]
where the latter inequality is a consequence of the choice of $N$ and the intervals $I_j$.
Therefore the conclusion follows by the Borel-Cantelli lemma. 
\end{proof}

\appendix

\section{The definition of partial captivity}\label{app:PC}

Here we show that our definition of partial captivity indeed coincides with the one introduced by Faure~\cite[Definition 15]{Faure}. We fix $\tau$ and mostly suppress it from the notation. For $\alpha\in\mathcal A^\infty$, let
\[
S(x;\alpha)=\sum_{k=1}^\infty\frac{\tau'(x_{[\alpha]_k})}{(E^k)'(x_{[\alpha]_k})}.
\]
Then $S_n(x;\alpha)$ defined by \eqref{eq:solutiontocohomologicaltruncated} is the truncation of $S(x;\alpha)$ of length $n$. 
For $\widetilde R>0$, set
\[
\widetilde{\mathcal N}_{\widetilde R}(n)=\sup_{y,\eta}\#\{
\alpha\in\mathcal A^n\mid\lvert \eta-S(y;\alpha)\rvert\le\widetilde R\cdot ((E^n)'(y_\alpha))^{-1}\}.
\]
By \cite[Proposition 17]{Faure} it follows that $f=f_{E,\tau}$ is partially captive in the sense of Faure if and only if
\begin{equation}\label{eq:FaurePC}
\lim_{n\to\infty}n^{-1}\log{\widetilde{\mathcal N}_{\widetilde R}(n)}=0
\end{equation}
for all $\widetilde R>0$. (We remark that $S$ and $S_n$ appear in \cite{Faure,nw} with a change of sign.)

We first show that if $\ncal(\tau)=1$ then \eqref{eq:FaurePC} holds.
Assume therefore that $\ncal(\tau)=1$ and let $\widetilde R>0$ be arbitrary. Fix $z=(x,s)\in\T^2$ and $\eta\in\R$. Define a unit vector $v=(\cos\theta,\sin\theta)$ by setting $\theta=\arctan\eta$. Suppose now that $\alpha$ is a word in $\mathcal A^n$ such that
\[
\lvert \eta-S(x;\alpha)\rvert\le\widetilde R\cdot ((E^n)'(x_\alpha))^{-1}.
\]
Note that the set of pre-images $\zeta\in f^{-n}(z)$ is in one-to-one correspondence with $\alpha\in\mathcal A^n$ via $\pi_x(\zeta)=x_\alpha$, where $\pi_x:\T^2\to\Sone$ is the natural projection onto the first coordinate. A simple calculation shows that
\[
\lvert S_n(x;\alpha)-\tan\theta\rvert\le(\widetilde R+\vartheta_\tau)((E^n)'(x_\alpha))^{-1}
\]
so $v\in Df^n(\zeta)\mathscr K_R$ whenever $\vartheta_R-\vartheta_\tau\ge\widetilde R$.
It follows that $\widetilde{\mathcal N}_{\widetilde R}(n)\le\ncal(\tau,R;n)$, so \eqref{eq:FaurePC} holds, as promised.

The reversed implication is proved in an analogue fashion. Given $v_0\in Df^n(\zeta)\mathscr K_R$ with $v_0=(\cos\theta_0,\sin\theta_0)$ we set $\eta_0=\tan\theta_0$. With $\alpha$ given by the correspondence $\pi_x(\zeta)=x_\alpha$, a calculation similar to the one above shows that
\[
\lvert \eta_0-S(x;\alpha)\rvert\le(\vartheta_R+\vartheta_\tau)((E^n)'(x_\alpha))^{-1}
\]
so if $\widetilde R>\vartheta_R+\vartheta_\tau$ then $\ncal(\tau,R;n)\le\widetilde{\mathcal N}_{\widetilde R}(n)$. Hence $\ncal(\tau)=1$ by virtue of \eqref{eq:FaurePC}.

\section{The transversality condition}\label{app:B}
In this appendix, we briefly discuss two other quantities defined similarly to $\ncal(\tau)$. 
We define
\begin{equation*}
\mbold(\tau,R;n)=\sup_z{\sup_{w\in f ^{-n}(z)}{\sum_{\zeta\not\pitchfork w}
\frac{1}{\det Df ^n(\zeta)}}}
\end{equation*}
where the sum is taken over $\zeta\in f ^{-n}(z)$ such that 
$
Df ^n(\zeta) \mathscr K_R\cap Df ^n(w)\mathscr K_R=\{0\}$.  We then set
\[
\mbold(\tau):=\limsup_{n\to\infty}\mbold(\tau,R;n)^\frac{1}{n}
\]
which does not depend on the choice of $R$. 
Similarly, for $z\in\Torus$ and $n\ge 2$, we set
\begin{equation*}
\nbold(\tau,R;n)=\sup_z{\sup_v{\sum_{\substack{\zeta\in f^{-n}(z)\\
v\in Df^n(\zeta)\mathscr K_R}} \frac{1}{\det Df ^n(\zeta)}}},
\end{equation*}
where the supremum $\sup_v$ is taken over all unit vectors $v$ in $\mathbb{R}^2$ and the summation is taken over those points $\zeta\in f^{-n}(z)$ for which $Df^n(\zeta)\mathscr K_R$ contains $v$.
This is sub-multiplicative so that we can define 
\begin{equation*}
\nbold(\tau):=\limsup_{n\to\infty}\nbold(\tau;n)^{\frac{1}{n}}.
\end{equation*}

\begin{lem} The following are all equivalent:
\begin{enumerate}[{\normalfont 1.}]
\item $\tau$ is cohomologous to a constant,
\item $\mbold(\tau)=1$,
\item $\nbold(\tau)=1$,
\item $\ncal(\tau)=\ell$.
\end{enumerate}
\end{lem}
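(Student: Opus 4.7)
The equivalence will be established cyclically as $(1) \Rightarrow (4) \Rightarrow (3) \Rightarrow (2) \Rightarrow (1)$.

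The first three implications are direct. For $(1) \Rightarrow (4)$: if $\tau = \varphi\circ E - \varphi + c$ then a telescoping computation using $\tau'(y) = \varphi'(E(y))E'(y) - \varphi'(y)$ yields $S_n(x;\alpha) = \varphi'(x) - \varphi'(x_{[\alpha]_n})/(E^n)'(x_{[\alpha]_n})$, so once $\vartheta_R \ge \|\varphi'\|_\infty$ the direction of slope $\varphi'(x)$ lies in every image cone $Df^n(\zeta)\mathscr K_R$, giving $\ncal(\tau,R;n) = \ell^n$; since $\ncal(\tau)$ is independent of $R$, this gives $\ncal(\tau) = \ell$. For $(4) \Rightarrow (3)$: Fekete's lemma yields $\ncal(\tau) = \inf_n \ncal(\tau,R;n)^{1/n}$, and combined with $\ncal(\tau,R;n) \le \ell^n$ this forces equality for every $n$, so all $\ell^n$ preimages of some $z_n$ share a common vector $v_n$ in their cones; the distortion bound of Lemma \ref{lm:distortion} then gives $\nbold(\tau,R;n) \ge C^{-1}$ uniformly in $n$, and combined with the trivial upper bound $\nbold(\tau,R;n) \le C$ (from the same distortion estimate) we obtain $\nbold(\tau) = 1$. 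For $(3) \Rightarrow (2)$: whenever $v \in Df^n(w)\mathscr K_R$ the inclusion $\{\zeta : v \in Df^n(\zeta)\mathscr K_R\} \subset \{\zeta : Df^n(\zeta)\mathscr K_R \cap Df^n(w)\mathscr K_R \ne \{0\}\}$ gives $\nbold(\tau,R;n) \le \mbold(\tau,R;n)$, so $\mbold(\tau) \ge \nbold(\tau) = 1$, which forces $\mbold(\tau) = 1$ via the analogous upper bound $\mbold(\tau,R;n) \le C$.

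The main work is $(2) \Rightarrow (1)$. From $\mbold(\tau) = 1$ I would extract, for each $\epsilon > 0$, a subsequence $n_k \to \infty$ together with witnesses $(z_k, w_k)$ and non-transverse preimage sets $\mathcal B_k \subset \mathcal A^{n_k}$ of weighted mass $\ge (1-\epsilon)^{n_k}$. Non-transversality gives the slope estimate $|S_{n_k}(x_k;\alpha) - S_{n_k}(x_k;\beta_k)| \le \vartheta_R((E^{n_k})'(x_{k,\alpha})^{-1} + (E^{n_k})'(x_{k,\beta_k})^{-1})$ for $\alpha$ indexing $\mathcal B_k$ (where $\beta_k$ indexes $w_k$). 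Combining with the tail bound $|S(x;\tilde\alpha) - S_n(x;\alpha)| \le \vartheta_\tau \cdot (E^n)'(x_\alpha)^{-1}$ (derived as in Appendix \ref{app:PC}) and passing to a subsequence so that $x_k \to x^*$ and the central slope of $w_k$ converges to some $s^* \in \R$, one obtains $S(x^*;\tilde\alpha) = s^*$ for every $\tilde\alpha \in \mathcal A^\infty$ arising as a limit of infinite extensions of words in the sets $\mathcal B_k$. A density argument in the symbolic space, projecting $\mathcal B_k$ onto length-$q$ cylinders via distortion, then promotes this to all $\tilde\alpha \in \mathcal A^\infty$ using continuity of $S(x^*;\cdot)$ in the product topology. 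Since $S(x^*;\tilde\alpha)$ is $\tilde\alpha$-independent, the one-step recursion $S(y;\tilde\gamma) = (\tau'(y_{\tilde\gamma_1}) + S(y_{\tilde\gamma_1};\tilde\gamma^{>1}))\, E'(y_{\tilde\gamma_1})^{-1}$ shows that $\psi(y) := S(y;\tilde\gamma)$ is well-defined and $\tilde\gamma$-independent on the dense set of preimages of $x^*$; continuity of $S(\cdot;\tilde\gamma_0)$ for a fixed $\tilde\gamma_0$ then extends $\psi$ to a continuous function on $\Sone$ satisfying $\tau'(z) = \psi(E(z))E'(z) - \psi(z)$. Integrating this identity and using $\int \psi(E(z))E'(z)\,dz = \ell \int \psi$ together with $\ell > 1$ forces $\int_\Sone \psi = 0$, so $\psi$ integrates to a periodic $\varphi$ with $\varphi' = \psi$, giving $\tau = \varphi\circ E - \varphi + c$.

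The main obstacle is the density promotion step in $(2) \Rightarrow (1)$: from the weak lower bound $\mbold(\tau,R;n_k) \ge (1-\epsilon)^{n_k}$, which for $\epsilon > 0$ provides only exponentially small weighted mass on $\mathcal B_k$, one must show that every $q$-cylinder of $\mathcal A^\infty$ is eventually charged by some extension of an element of $\mathcal B_k$. This requires careful control of how the weighted mass distributes across length-$q$ cylinders, leveraging the fact that the mass is exponentially close to maximal (not merely bounded away from zero). A cleaner alternative is to prove $(4) \Rightarrow (1)$ directly --- where the density issue is vacuous because $\mathcal B_{n} = \mathcal A^{n}$ already contains every truncation of every infinite path --- and then close the cycle via a separate extraction establishing $(3) \Rightarrow (4)$ or $(2) \Rightarrow (4)$ by similar but more delicate compactness arguments.
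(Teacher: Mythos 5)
Your implications $(1)\Rightarrow(4)$, $(4)\Rightarrow(3)$ and $(3)\Rightarrow(2)$ are all correct; in fact $(4)\Rightarrow(3)$ is, in substance, the paper's own argument (the paper proves the contrapositive, that $\nbold(\tau)<1$ forces $\ncal(\tau)<\ell$, using Fekete's lemma and the distortion bound \eqref{eq:distortion} exactly as you do). But the paper never attempts $(2)\Rightarrow(1)$: it cites \cite{ButterleyEslami} for the equivalence of items 1, 2, 3 and only supplies the two easy links involving item 4. The entire nontrivial analytic content of your cyclic scheme is therefore concentrated in the one implication you do not complete.

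That implication has a genuine gap, which you yourself flag. The hypothesis $\mbold(\tau)=1$ only yields, for each $\epsilon>0$, infinitely many $n$ with $\mbold(\tau,R;n)\ge(1-\epsilon)^n$; since the total weighted mass $\sum_{\zeta\in f^{-n}(z)}(\det Df^n(\zeta))^{-1}$ is pinched between two constants by \eqref{eq:distortion}, this is an exponentially small fraction of the total and may be carried by a vanishingly small proportion of cylinders. Your compactness extraction then gives $S(x^*;\tilde\alpha)=s^*$ only on whatever symbolic limit set survives, and the ``density promotion'' to all of $\mathcal A^\infty$ --- the step needed to run the one-step recursion on a dense set of preimages --- is precisely what is missing. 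The proposed fallback does not close the cycle either: proving $(4)\Rightarrow(1)$ directly is indeed feasible (there all $\ell^n$ cylinders are charged, so the rigidity argument goes through), but you would then still need $(2)\Rightarrow(4)$ or $(3)\Rightarrow(4)$, and the obvious quantitative route fails --- from $\nbold(\tau,R;n)\le\lambda^{-n}\,\ncal(\tau,R;n)$ one only gets $\ncal(\tau)\ge\lambda$ out of $\nbold(\tau)=1$, not $\ncal(\tau)=\ell$. The ``more delicate compactness arguments'' you invoke at the end are exactly the nontrivial rigidity statement that the paper outsources to \cite{ButterleyEslami}; as written, your proof is incomplete at this point.
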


\begin{proof}
For the equivalence of items $1$, $2$, $3$, we refer to \cite{ButterleyEslami}. It is clear that item 4 follows from item 1. 
We prove that $\nbold(\tau)<1$ implies $\ncal(\tau)<\ell$ to complete the proof. 
If $\nbold(\tau)<1$, we have 
$\nbold(\tau,R;n)<(1-\varepsilon)^n$ for sufficiently small $\varepsilon>0$ and sufficiently large $n$. 
This means that $\ncal(\tau,R;n)$ cannot be equal to $\ell^n$ for infinitely many $n$, because for each such $n$ we have $C^{-1}\le\nbold(\tau,R;n)<(1-\ve)^n$ by \eqref{eq:distortion}, which leads to a contradiction if we choose $n$ large enough.
Hence, $\ncal(\tau,R;n)<\ell^n$ for sufficiently large $n$, so
$\ncal(\tau,R;n)^{\frac{1}{n}}\le \ell-\eta$ for some small $\eta>0$. Since $\ncal(\tau,R;n)$ is sub-multiplicative, we find that 
$\ncal(\tau)\le \ell-\eta<\ell$. 
\end{proof}

From the lemma above, we see that the partial captivity condition $\ncal(\tau)=1$ is a  much stronger condition than requiring that 
$\tau$ \emph{not} be cohomologous to a constant. Nevertheless, Theorem \ref{conj:maingoal} shows that partial captivity is still a generic condition. 
Finally, setting $\chi=\lim_{n\to \infty} (\min (E^n)')^{-\frac{1}{n}}\le \lambda^{-1}<1$, we observe that
\[
\chi\le \mbold(\tau)\le 1, \quad
\chi\le \nbold(\tau)\le 1, \quad
1\le \ncal(\tau)\le \ell
\]
in general. The partial captivity condition $\ncal(\tau)=1$ implies that $\mbold(\tau)$ and $\nbold(\tau)$ take the smallest possible value, that is, 
$
\mbold(\tau)=\nbold(\tau)=\chi$.

\section*{Acknowledgements}
The research of Jens Wittsten was supported by Knut och Alice Wallenbergs Stiftelse Grant No.~2013.0347.
The research of Masato Tsujii was supported by JSPS Kakenhi Grant No.~20251598.


\bibliographystyle{plain}
\bibliography{Refs}

\end{document}